\newtheorem{theorem}{Theorem}[section]
\newtheorem{proposition}[theorem]{Proposition}
\newtheorem{corollary}[theorem]{Corollary}
\theoremstyle{definition}
\newtheorem{definition}[theorem]{Definition}
\newtheorem{example}[theorem]{Example}
\newtheorem{question}[theorem]{Question}
\newtheorem{notation}[theorem]{Notation}
\theoremstyle{remark}
\newtheorem{remark}[theorem]{Remark}
\numberwithin{equation}{section}
\begin{document}
\title[Fixed-Circle Results]{New fixed-circle results related to $F_{c}$%
-contractive and $F_{c}$-expanding mappings on metric spaces}
\author[N. Mlaiki\MakeLowercase{,} N. \"{O}zg\"{u}r \MakeLowercase{and} N. Ta%
\c{s}]{Nabil MLAIKI$^{1}$\MakeLowercase{,} N\.{I}HAL \"{O}ZG\"{U}R$^{2}$ %
\MakeLowercase{and} N\.{I}HAL TA\c{S}$^{2}$}
\address{$^{1}$Department of Mathematics and General Sciences, Prince Sultan
University, Riyadh, Saudi Arabia.}
\email{\textcolor[rgb]{0.00,0.00,0.84}{nmlaiki@psu.edu.sa; nmlaiki2012@gmail.com}}
\address{$^{2}$Bal\i kesir University, Department of Mathematics, 10145
Bal\i kesir, TURKEY.}
\email{%
\textcolor[rgb]{0.00,0.00,0.84}{nihal@balikesir.edu.tr;
nihaltas@balikesir.edu.tr}}
\subjclass[2010]{Primary 47H10; Secondary 54H25.}
\keywords{Fixed point, fixed circle, fixed disc.}

\begin{abstract}
The fixed-circle problem is a recent problem about the study of geometric
properties of the fixed point set of a self-mapping on metric (resp.
generalized metric) spaces. The fixed-disc problem occurs as a natural
consequence of this problem. Our aim in this paper, is to investigate new
classes of self-mappings which satisfy new specific type of contraction on a
metric space. We see that the fixed point set of any member of these classes
contains a circle (or a disc) called the fixed circle (resp. fixed disc) of
the corresponding self-mapping. For this purpose, we introduce the notions
of an $F_{c}$-contractive mapping and an $F_{c}$-expanding mapping.
Activation functions with fixed circles (resp. fixed discs) are often seen
in the study of neural networks. This shows the effectiveness of our
fixed-circle (resp. fixed-disc) results. In this context, our theoretical
results contribute to future studies on neural networks.
\end{abstract}

\maketitle


\setcounter{page}{1}


\section{\textbf{Introduction}}

\label{sec:intro} In the last few decades, the Banach contraction principle
has been generalized and studied by different approaches such as to
generalize the used contractive condition (see \cite{Altun}, \cite{Caristi},
\cite{Chatterjea}, \cite{Ciric}, \cite{Djafari}, \cite{Edelstein}, \cite%
{Kannan}, \cite{Nemytskii}, \cite{Olgun} and \cite{Rhoades} for more
details) and to generalize the used metric space (see \cite{an}, \cite%
{b-metric}, \cite{2-metric}, \cite{Karayilan}, \cite{G-metric}, \cite%
{Mitrovic}, \cite{Pasic}, \cite{Sedghi-S-metric} and \cite{Sihag} for more
details). Recently, some fixed-circle theorems have been introduced as a
geometrical direction of generalization of the fixed-point theorems (see
\cite{Ozgur-malaysian}, \cite{Ozgur-fixed-circle-S-metric}, \cite%
{Ozgur-circle-tez} and \cite{Ozgur-Aip} for more details).

Let $(X,d)$ be a metric space and $f$ be a self-mapping on $X$. First, we
recall that the circle $C_{u_{0},\rho }=\left\{ u\in X:d(u,u_{0})=\rho
\right\} $ is a fixed circle of $f$ if $fu=u$ for all $u\in C_{u_{0},\rho }$
(see \cite{Ozgur-malaysian}). Similarly, the disc $D_{u_{0},\rho }=\left\{
u\in X:d(u,u_{0})\leq \rho \right\} $ is called a fixed disc of $f$ if $fu=u$
for all $u\in D_{u_{0},\rho }$. There are some examples of self-mappings
such that the fixed point set of the self-mapping contains a circle (or a
disc). For example, let us consider the metric space $\left(
\mathbb{C}
,d\right) $ with the metric
\begin{equation}
d\left( z_{1},z_{2}\right) =\left\vert x_{1}-x_{2}\right\vert +\left\vert
y_{1}-y_{2}\right\vert +\left\vert x_{1}-x_{2}+y_{1}-y_{2}\right\vert ,
\label{first metric}
\end{equation}%
defined for the complex numbers $z_{1}=x_{1}+iy_{1}$ and $z_{2}=x_{2}+iy_{2}$%
. We note that the metric defined in (\ref{first metric}) is the metric
induced by the norm function%
\begin{equation*}
\left\Vert z\right\Vert =\left\Vert x+iy\right\Vert =\left\vert x\right\vert
+\left\vert y\right\vert +\left\vert x+y\right\vert \text{,}
\end{equation*}%
(see Example 2.4 in \cite{Ozgur norm}). The circle $C_{0,1}$ is seen in the
following figure which is drawn using Mathematica \cite{Mathematica}. Define
the self-mapping $f_{1}$ on $\mathbb{C}$ as follows$:$
\begin{equation*}
f_{1}z=\left\{
\begin{array}{ccc}
z & ; & x\leq 0,y\geq 0\text{ or }x\geq 0,y\leq 0 \\
-y+\frac{1}{2}+i\left( -x+\frac{1}{2}\right)  & ; & x>0,y>0 \\
-y-\frac{1}{2}+i\left( -x-\frac{1}{2}\right)  & ; & x<0,y<0%
\end{array}%
\right. \text{,}
\end{equation*}%
for each $z=x+iy\in \mathbb{C}$, then clearly, the fixed point set of $f_{1}$
contains the circle $C_{0,1}$, that is, $C_{0,1}$ is a fixed circle of $f_{1}
$. Therefore, the study of geometric properties of the fixed point set of a
self-mapping seems to be an interesting problem in case where the fixed
point is non unique.

\begin{figure}[h]
\centering
\includegraphics[width=6.5cm]{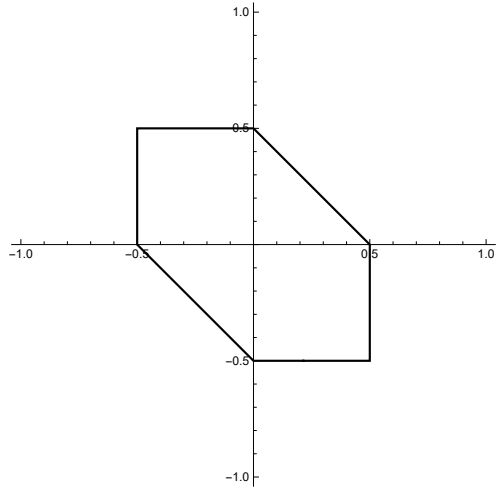}
\caption{\Small The graph of the circle $C_{0,1}$.}
\label{fig:1}
\end{figure}

On the other hand, fixed points of self-mappings play an important role in
the study of neural networks. For example, in \cite{Mandic}, it was pointed
out that fixed points of a neural network can be determined by fixed points
of the employed activation function. If the global input-output relationship
in a neural network can be considered in the framework of M\"{o}bius
transformations, then the existence of one or two fixed points of the neural
network is guaranteed (see \cite{Jones and Singerman} for basic algebraic
and geometric properties of M\"{o}bius transformations). Some possible
applications of theoretical fixed-circle results to neural networks have
been investigated in the recent studies \cite{Ozgur-malaysian} and \cite%
{Ozgur-Aip}.

Next, we remind the reader of the following theorems on a fixed circle.

\begin{theorem}
\cite{Ozgur-malaysian} \label{thm11} Let $(X,d)$ be a metric space and
consider the map%
\begin{equation}
\varphi :X\rightarrow \left[ 0,\infty \right) \text{, }\varphi (u)=d(u,u_{0})%
\text{,}  \label{funtion_phi}
\end{equation}%
for all $u\in X$. If there exists a self-mapping $f:X\rightarrow X$
satisfying

$(C1)$ $d(u,fu)\leq \varphi (u)-\varphi (fu)$

and

$(C2)$ $d(fu,u_{0})\geq \rho $, \newline
for each $u\in C_{u_{0},\rho }$, then the circle $C_{u_{0},\rho }$ is a
fixed circle of $f$.
\end{theorem}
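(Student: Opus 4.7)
The plan is to fix an arbitrary point $u$ on the circle $C_{u_{0},\rho}$ and show directly that $fu=u$ by deriving $d(u,fu)\leq 0$ from the two hypotheses. Since $u\in C_{u_{0},\rho}$ means precisely $d(u,u_{0})=\rho$, the map $\varphi$ defined in (\ref{funtion_phi}) evaluates to $\varphi(u)=\rho$ at every such point, which is the observation that converts the abstract condition (C1) into a concrete numerical inequality.

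I would proceed in three short steps. First, I would substitute $\varphi(u)=\rho$ into (C1) to obtain
\[
d(u,fu)\leq \rho-\varphi(fu)=\rho-d(fu,u_{0}).
\]
Second, I would invoke (C2), which says $d(fu,u_{0})\geq \rho$, to conclude that the right-hand side is non-positive, i.e. $\rho-d(fu,u_{0})\leq 0$. Third, combining these with the non-negativity of the metric yields $0\leq d(u,fu)\leq 0$, forcing $d(u,fu)=0$ and hence $fu=u$. Because $u$ was an arbitrary element of $C_{u_{0},\rho}$, every point of the circle is fixed, which is exactly the definition recalled just before the theorem.

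There is essentially no technical obstacle here: the content of the proof is the single sharp observation that (C1) and (C2) are calibrated so that they pinch $d(u,fu)$ between $0$ and a non-positive quantity on the circle. The only place where one must be a little careful is to record that $\varphi$ takes the constant value $\rho$ on $C_{u_{0},\rho}$ (so that (C1) can be read as a bound by $\rho-d(fu,u_{0})$, rather than by some other quantity depending on $u$), after which the argument collapses to a one-line inequality chain.
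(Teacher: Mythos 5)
Your proof is correct and is exactly the intended argument: on the circle $\varphi(u)=\rho$, so (C1) gives $d(u,fu)\leq\rho-d(fu,u_{0})$, and (C2) makes the right-hand side nonpositive, pinching $d(u,fu)$ to $0$. The paper states this theorem as a cited result without reproducing its proof, but your argument coincides with the standard one from the original reference.
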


\begin{theorem}
\cite{Ozgur-malaysian} \label{thm12} Let $(X,d)$ be a metric space and
consider the map $\varphi $ as defined in $($\ref{funtion_phi}$)$. Also,
assume that $f:X\rightarrow X$ satisfies the following conditions:

$(C1)^{\ast }$ $d(u,fu)\leq \varphi (u)+\varphi (fu)-2\rho $

and

$(C2)^{\ast }$ $d(fu,u_{0})\leq \rho $, \newline
for each $u\in C_{u_{0},\rho }$, then the circle $C_{u_{0},\rho }$ is a
fixed circle of $f$.
\end{theorem}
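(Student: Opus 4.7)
The plan is to proceed directly by chasing the definitions once $u$ is assumed to lie on the circle $C_{u_{0},\rho}$, exactly parallel to the proof strategy of Theorem \ref{thm11}. The key observation is that membership in $C_{u_{0},\rho}$ pins down the value of $\varphi(u)$ exactly, so the inequalities $(C1)^{\ast}$ and $(C2)^{\ast}$ collapse into a very rigid two-sided bound on $\varphi(fu)$.

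First I would fix an arbitrary $u\in C_{u_{0},\rho}$ and simply record that, by definition of the circle, $\varphi(u)=d(u,u_{0})=\rho$. Substituting this value into $(C1)^{\ast}$ turns it into the one-variable inequality $d(u,fu)\leq \varphi(fu)-\rho$. Since $d(u,fu)\geq 0$, this forces $\varphi(fu)\geq \rho$. On the other hand $(C2)^{\ast}$ is exactly the reverse inequality $\varphi(fu)=d(fu,u_{0})\leq \rho$. Putting these together gives $\varphi(fu)=\rho$, i.e., $fu$ also lies on $C_{u_{0},\rho}$.

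Feeding $\varphi(fu)=\rho$ back into the reduced form of $(C1)^{\ast}$ yields $d(u,fu)\leq \rho-\rho=0$, hence $d(u,fu)=0$ and therefore $fu=u$. Since $u\in C_{u_{0},\rho}$ was arbitrary, this shows that every point of $C_{u_{0},\rho}$ is fixed by $f$, so $C_{u_{0},\rho}$ is a fixed circle of $f$.

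There is essentially no technical obstacle here; the only thing worth being careful about is the direction of the inequalities, because $(C1)^{\ast}$ and $(C2)^{\ast}$ involve $\varphi(fu)$ with opposite signs and the whole argument hinges on the two bounds pinching $\varphi(fu)$ to the single value $\rho$. If one swapped the inequality in $(C2)^{\ast}$, the argument would fail, which is the structural reason this statement is dual to Theorem \ref{thm11} rather than a mere restatement of it.
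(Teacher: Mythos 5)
Your proof is correct, and it is the standard argument for this result (the paper only recalls the theorem from the cited source, but the proof there is exactly this combination of $(C1)^{\ast}$ and $(C2)^{\ast}$ on the circle). Note you could shorten it slightly: once $(C1)^{\ast}$ reduces to $d(u,fu)\leq \varphi(fu)-\rho$, the bound $\varphi(fu)\leq\rho$ from $(C2)^{\ast}$ immediately gives $d(u,fu)\leq 0$, so the intermediate pinching of $\varphi(fu)$ to the exact value $\rho$ is not needed.
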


\begin{theorem}
\cite{Ozgur-malaysian} \label{thm13} Let $(X,d)$ be a metric space and
consider the map $\varphi $ as defined in $($\ref{funtion_phi}$)$. Also,
assume that $f:X\rightarrow X$ satisfies the following conditions:

$(C1)^{\ast \ast }$ $d(u,fu)\leq \varphi (u)-\varphi (fu)$

and

$(C2)^{\ast \ast }$ $hd(u,fu)+d(fu,u_{0})\geq \rho $, \newline
for each $u\in C_{u_{0},\rho }$ and some $h\in \left[ 0,1\right) $, then the
circle $C_{u_{0},\rho }$ is a fixed circle of $f$.
\end{theorem}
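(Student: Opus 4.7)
The plan is to fix an arbitrary $u \in C_{u_{0},\rho}$ and show directly that $fu = u$, by combining $(C1)^{\ast\ast}$ with the triangle inequality to convert $(C2)^{\ast\ast}$ into a contradiction unless $d(u,fu) = 0$.

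First I would unpack $(C1)^{\ast\ast}$. Since $u \in C_{u_{0},\rho}$, we have $\varphi(u) = d(u,u_{0}) = \rho$, so $(C1)^{\ast\ast}$ reads
\[
d(u,fu) \leq \rho - d(fu,u_{0}),
\qquad\text{i.e.}\qquad d(u,fu) + d(fu,u_{0}) \leq \rho.
\]
On the other hand, the triangle inequality gives $\rho = d(u,u_{0}) \leq d(u,fu) + d(fu,u_{0})$. Combining these two bounds forces the equality $d(u,fu) + d(fu,u_{0}) = \rho$, so in particular
\[
d(fu,u_{0}) = \rho - d(u,fu).
\]

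Next I would feed this identity into $(C2)^{\ast\ast}$. Substituting yields
\[
hd(u,fu) + \rho - d(u,fu) \geq \rho,
\]
which simplifies to $(h-1)\,d(u,fu) \geq 0$. Since $h \in [0,1)$, the coefficient $h-1$ is strictly negative, so the inequality forces $d(u,fu) \leq 0$, and hence $d(u,fu) = 0$. Therefore $fu = u$, and since $u \in C_{u_{0},\rho}$ was arbitrary, $C_{u_{0},\rho}$ is a fixed circle of $f$.

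There is no real obstacle here; the proof is a short algebraic manipulation. The only subtlety worth flagging is the role of the strict inequality $h < 1$: if one allowed $h = 1$, then $(h-1)d(u,fu) \geq 0$ would become vacuous and the conclusion $d(u,fu) = 0$ would fail, which is why the hypothesis takes $h \in [0,1)$ rather than $h \in [0,1]$. Note also that the argument recovers Theorem \ref{thm11} as the special case $h = 0$, which provides a useful sanity check.
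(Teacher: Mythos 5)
Your proof is correct. Note that the paper itself states Theorem \ref{thm13} without proof (it is quoted from \cite{Ozgur-malaysian}), so there is nothing internal to compare against; your argument is the natural one. One small remark: the triangle-inequality step is a detour you do not need. From $(C1)^{\ast\ast}$ you already have $d(fu,u_{0})\leq \rho - d(u,fu)$, and $(C2)^{\ast\ast}$ gives $d(fu,u_{0})\geq \rho - h\,d(u,fu)$; combining these two inequalities directly yields $(1-h)\,d(u,fu)\leq 0$ and hence $d(u,fu)=0$, without ever invoking $\rho \leq d(u,fu)+d(fu,u_{0})$. Your observations about the necessity of $h<1$ and the recovery of Theorem \ref{thm11} at $h=0$ are both apt.
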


\begin{theorem}
\cite{Ozgur-Aip} \label{aip} Let $(X,d)$ be a metric space and assume that
the mapping $\varphi _{\rho }:%
\mathbb{R}
^{+}\cup \left\{ 0\right\} \rightarrow
\mathbb{R}
$ be defined by%
\begin{equation}
\varphi _{\rho }(x)=\left\{
\begin{array}{ccc}
x-\rho & ; & x>0 \\
0 & ; & x=0%
\end{array}%
\right. \text{,}  \label{phi_mapping}
\end{equation}%
for all $x\in
\mathbb{R}
^{+}\cup \left\{ 0\right\} $. If there exists a self-mapping $f:X\rightarrow
X$ satisfying

\begin{enumerate}
\item $d(fu,u_{0})=\rho $ for each $u\in C_{u_{0},\rho }$,

\item $d(fu,fv)>\rho $ for each $u,v\in C_{u_{0},\rho }$ and $u\neq v$ ,

\item $d(fu,fv)\leq d(u,v)-\varphi _{\rho }(d(u,fu))$ for each $u,v\in
C_{u_{0},\rho }$,
\end{enumerate}

then the circle $C_{u_{0},\rho }$ is a fixed circle of $f$.
\end{theorem}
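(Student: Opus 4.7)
The plan is to argue by contradiction: suppose there exists a point $u\in C_{u_{0},\rho}$ with $fu\neq u$ and derive a contradiction from conditions (1)--(3). The key observation is that condition (1) guarantees $f$ maps the circle $C_{u_{0},\rho}$ into itself, so plugging $v=fu$ into conditions (2) and (3) keeps us inside the circle and forces these two conditions to compete.

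More precisely, I would set $v:=fu$. By (1), $v\in C_{u_{0},\rho}$, and the assumption $fu\neq u$ means $u\neq v$. First I would apply condition (2) to the pair $(u,v)$ to obtain the strict lower bound
\[
d(fu,f(fu))=d(fu,fv)>\rho .
\]
Next I would apply condition (3) to the same pair $(u,v)$, noting that $d(u,fu)>0$ so that $\varphi_{\rho}(d(u,fu))=d(u,fu)-\rho$. This yields
\[
d(fu,f(fu))=d(fu,fv)\le d(u,v)-\varphi_{\rho}(d(u,fu))=d(u,fu)-(d(u,fu)-\rho)=\rho .
\]
Putting the two inequalities together gives $\rho<d(fu,f(fu))\le\rho$, a contradiction. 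Hence every $u\in C_{u_{0},\rho}$ satisfies $fu=u$, so $C_{u_{0},\rho}$ is a fixed circle of $f$.

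There is essentially no hard step: the whole argument is a one-line substitution once one realizes that (1) legitimizes the choice $v=fu$ inside the circle, and that the definition of $\varphi_{\rho}$ is calibrated so that the right-hand side of (3) collapses exactly to $\rho$. The only mild care needed is to invoke the piecewise definition of $\varphi_{\rho}$ correctly by first noting $d(u,fu)>0$ (guaranteed by the contradiction hypothesis $fu\neq u$); the $x=0$ branch of $\varphi_{\rho}$ plays no role because, if it did, there would be nothing to prove for that point.
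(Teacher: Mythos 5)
Your proof is correct: condition (1) legitimately places $v:=fu$ on the circle, condition (2) then gives $d(fu,f(fu))>\rho$, and condition (3) with $\varphi_{\rho}(d(u,fu))=d(u,fu)-\rho$ collapses to $d(fu,f(fu))\le\rho$, yielding the desired contradiction. Note that this paper only quotes the theorem from \cite{Ozgur-Aip} without reproducing a proof, but your argument is precisely the standard one used there, so there is nothing to add.
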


This manuscript is structured as follows; in Section \ref{sec:1}, we give
some generalizations of Theorems \ref{thm11}, \ref{thm12} and \ref{thm13}.
In Section \ref{sec:2}, we present the definitions of an \textquotedblleft $%
F_{c}$-contraction\textquotedblright\ and an \textquotedblleft $F_{c}$%
-expanding map\textquotedblright\ where we prove new theorems on a fixed
circle. In section \ref{sec:3}, we consider the fixed point sets of some
activation functions frequently used in the study of neural networks with a
geometric viewpoint. This shows the effectiveness of our fixed-circle
results. In section \ref{sec:4}, we present some open problems for future
works. Our results show the importance of the geometry of fixed points of a
self-mapping when the fixed point is non unique.

\section{\textbf{New Fixed-Circle Theorems for Some Contractive Mappings}}

\label{sec:1} First, we give a fixed-circle theorem using an auxiliary
function.

\begin{theorem}
\label{thm21} Let $(X,d)$ be a metric space, $f$ be a self-mapping on $X$
and the mapping $\theta _{\rho }:%
\mathbb{R}
\rightarrow
\mathbb{R}
$ be defined by%
\begin{equation*}
\theta _{\rho }(x)=\left\{
\begin{array}{ccc}
\rho & ; & x=\rho \\
x+\rho & ; & x\neq \rho%
\end{array}%
\right. \text{,}
\end{equation*}%
for all $x\in
\mathbb{R}
$ and $\rho \geq 0$. Suppose that

\begin{enumerate}
\item $d(fu,u_{0})\leq \theta _{\rho }(d(u,u_{0}))+Ld(u,fu)$ for some $L\in
\left( -\infty ,0\right] $ and each $u\in X$,

\item $\rho \leq d(fu,u_{0})$ for each $u\in C_{u_{0},\rho }$,

\item $d(fu,fv)\geq 2\rho $ for each $u,v\in C_{u_{0},\rho }$ and $u\neq v$,

\item $d(fu,fv)<\rho +d(v,fu)$ for each $u,v\in C_{u_{0},\rho }$ and $u\neq
v $,
\end{enumerate}

then $f$ fixes the circle $C_{u_{0},\rho }$.
\end{theorem}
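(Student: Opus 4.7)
The plan is a two-stage argument: first show that $f$ sends the circle $C_{u_{0},\rho}$ into itself, and then show that the restriction $f|_{C_{u_{0},\rho}}$ must be the identity. The hypotheses are conveniently split for this: the first stage uses only conditions (1) and (2), while the second stage uses only (3) and (4).

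For the first stage, I would fix $u\in C_{u_{0},\rho}$, so $d(u,u_{0})=\rho$. The piecewise definition of $\theta_{\rho}$ gives $\theta_{\rho}(d(u,u_{0}))=\theta_{\rho}(\rho)=\rho$; this is precisely what the unusual definition at $x=\rho$ is designed to achieve. Plugging into (1) yields $d(fu,u_{0})\le \rho+L\,d(u,fu)$, and since $L\le 0$ this gives $d(fu,u_{0})\le \rho$. Combining with (2) forces equality, so $fu\in C_{u_{0},\rho}$.

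For the second stage, suppose for contradiction that some $u\in C_{u_{0},\rho}$ satisfies $fu\neq u$. By the first stage, $v:=fu$ also lies on $C_{u_{0},\rho}$, and $u\neq v$, so both (3) and (4) apply to the pair $(u,v)$. The key observation is that with this particular substitution $d(v,fu)=d(fu,fu)=0$, so (4) collapses to $d(fu,f(fu))<\rho$, whereas (3) demands $d(fu,f(fu))\ge 2\rho$. Together these give $2\rho\le d(fu,f(fu))<\rho$, contradicting $\rho\ge 0$.

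The argument is essentially routine once one spots the substitution $v=fu$; that is the only non-mechanical step, and I expect it to be the main obstacle in the sense that the hypotheses as stated look opaque until one realizes (4) is tailored to have a vanishing right-hand term under this choice. A minor edge case is $\rho=0$, for which $C_{u_{0},\rho}=\{u_{0}\}$ and the second stage becomes vacuous, but the first stage already gives $fu_{0}=u_{0}$ directly from $d(fu_{0},u_{0})\le 0$.
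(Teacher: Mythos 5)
Your proof is correct and follows essentially the same route as the paper: derive $d(fu,u_{0})=\rho$ from (1) and (2), then for $fu\neq u$ substitute $v=fu$ into (3) and (4) to get $2\rho\le d(fu,f^{2}u)<\rho$. The only (harmless, slightly cleaner) difference is that the paper splits into the cases $L=0$ and $L<0$, disposing of the latter directly from $\rho\le\rho+Ld(u,fu)$, whereas you handle all $L\le 0$ uniformly through the (3)--(4) contradiction.
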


\begin{proof}
Let $u\in C_{u_{0},\rho }$ be an arbitrary point. By the conditions (1) and
(2), we have%
\begin{equation*}
d(fu,u_{0})\leq \theta _{\rho }(d(u,u_{0}))+Ld(u,fu)=\rho +Ld(u,fu)
\end{equation*}%
and so%
\begin{equation}
\rho \leq d(fu,u_{0})\leq \rho +Ld(u,fu)\text{. }  \label{eqn21}
\end{equation}%
We have two cases.

\textbf{Case 1.} Let $L=0$. Then we find $d(fu,u_{0})=\rho $ by (\ref{eqn21}%
), that is, we have $fu\in C_{u_{0},\rho }$. Then $d(u,fu)=0$ or $%
d(u,fu)\neq 0$. Assume $d(u,fu)\neq 0$ for $u\in C_{u_{0},\rho }$. Since $%
u\neq fu$, from the condition (3), we obtain%
\begin{equation}
d(fu,f^{2}u)\geq 2\rho \text{.}  \label{eqn22}
\end{equation}%
Also using the condition (4), we get%
\begin{equation*}
d(fu,f^{2}u)<\rho +d(fu,fu)
\end{equation*}%
and hence%
\begin{equation*}
d(fu,f^{2}u)<\rho \text{.}
\end{equation*}%
which contradicts the inequality (\ref{eqn22}). Therefore, it should be $%
d(u,fu)=0$ which implies $fu=u$.

\textbf{Case 2.} Let $L\in \left( -\infty ,0\right) $. If $d(u,fu)\neq 0$ we
get a contradiction by (\ref{eqn21}). Hence it should be $d(u,fu)=0$.

Thereby, we obtain $fu=u$ for all $u\in C_{u_{0},\rho }$, that is, $%
C_{u_{0},\rho }$ is a fixed circle of $f$. In other words, the fixed point
set of $f$ contains the circle $C_{u_{0},\rho }$.
\end{proof}

\begin{remark}
Notice that, if we consider the case $L\in \left( -\infty ,0\right) $ in the
condition $(1)$ of Theorem \ref{thm21} for $u\in C_{u_{0},\rho }$, then we
get%
\begin{equation*}
-Ld(u,fu)\leq \theta _{\rho
}(d(u,u_{0}))-d(fu,u_{0})=d(u,u_{0})-d(fu,u_{0})=\varphi (u)-\varphi (fu)%
\text{ }
\end{equation*}%
and hence%
\begin{equation*}
-Ld(u,fu)\leq \varphi (u)-\varphi (fu)\text{.}
\end{equation*}%
For $L=-1$, we obtain%
\begin{equation*}
d(u,fu)\leq \varphi (u)-\varphi (fu)\text{.}
\end{equation*}%
This means that the condition $(C1)$ $($resp. the condition $(C1)^{\ast \ast
})$ is satisfied for this case.

Clearly, the condition $(2)$ of Theorem \ref{thm21} is the same as condition
$(C2)$. On the other hand, if the condition $(2)$ of Theorem \ref{thm21} is
satisfied then the condition $(C2)^{\ast \ast }$ is satisfied. Consequently,
Theorem \ref{thm21} is a generalization of Theorem \ref{thm11} and Theorem %
\ref{thm13} for the cases $L\in \left( -\infty ,0\right) \setminus \left\{
-1\right\} $. For the case $L=-1$, Theorem \ref{thm21} coincides with
Theorem \ref{thm11} and it is a special case of Theorem \ref{thm13}.
\end{remark}

Next, we present some illustrative examples.

\begin{example}
\label{exm:21} Let $\left(
\mathbb{R}
,d\right) $ be the metric space with the usual metric $d(x_{1},x_{2})=\left%
\vert x_{1}-x_{2}\right\vert $ and consider the circle $C_{0,1}=\left\{
-1,1\right\} $. If we define the self-mapping $f_{1}:%
\mathbb{R}
\rightarrow
\mathbb{R}
$ as%
\begin{equation*}
f_{1}x=\left\{
\begin{array}{ccc}
3x^{2}+x-3 & ; & x\in \left\{ -1,1\right\} \\
0 & ; & \text{otherwise}%
\end{array}%
\right. \text{,}
\end{equation*}%
for each $x\in
\mathbb{R}
$, then it is not difficult to see that $f_{1}$ satisfies the hypothesis of
Theorem \ref{thm21} for the circle $C_{0,1}$ and $L=\frac{-1}{2}$. Clearly, $%
C_{0,1}$ is the fixed circle of $f_{1}$.
\end{example}

\begin{example}
\label{exm:22} Consider $\left(
\mathbb{R}
,d\right) $ to be the usual metric space and the circle $C_{0,2}=\left\{
-2,2\right\} $. Define $f_{2}:%
\mathbb{R}
\rightarrow
\mathbb{R}
$ by%
\begin{equation*}
f_{2}x=\left\{
\begin{array}{ccc}
2 & ; & x=-2 \\
-2 & ; & x=2 \\
0 & ; & \text{otherwise}%
\end{array}%
\right. \text{,}
\end{equation*}%
for each $x\in
\mathbb{R}
$, then $f_{2}$ does not satisfy the condition $(1)$ of Theorem \ref{thm21}
for each $x\in C_{0,2}$ and for any $L\in \left( -\infty ,0\right) $. Also, $%
f_{2}$ does not satisfy the condition $(4)$ for each $x\in C_{0,2}$ and for
any $L\in \left( -\infty ,0\right] $. Clearly, $f_{2}$ does not fix $C_{0,2}$
and this example shows that the condition $(4)$ is crucial in Theorem \ref%
{thm21}.
\end{example}

\begin{example}
\label{exm:23} Consider $\left(
\mathbb{R}
,d\right) $ to be the usual metric space and the circles $C_{0,1}=\left\{
-1,1\right\} $ and $C_{0,2}=\left\{ -2,2\right\} $. If we define $f_{3}:%
\mathbb{R}
\rightarrow
\mathbb{R}
$ as%
\begin{equation*}
f_{3}x=\left\{
\begin{array}{ccc}
x & ; & x\in C_{0,1}\cup C_{0,2} \\
0 & ; & \text{otherwise}%
\end{array}%
\right. \text{,}
\end{equation*}%
for each $x\in
\mathbb{R}
$, then $f_{3}$ satisfies the hypothesis of Theorem \ref{thm21} for each of
the circles $C_{0,1}$ and $C_{0,2}$ and for any $L\in \left[ -1,0\right] $.
Clearly, $C_{0,1}$ and $C_{0,2}$ are the fixed circles of $f_{3}$.
\end{example}

We give another fixed-circle result.

\begin{theorem}
\label{thm22} Let $(X,d)$ be a metric space, $f$ be a self-mapping on $X$
and the mapping $\theta _{\rho }:%
\mathbb{R}
\rightarrow
\mathbb{R}
$ be defined by%
\begin{equation*}
\theta _{\rho }(x)=\left\{
\begin{array}{ccc}
\rho & ; & x=\rho \\
x+\rho & ; & x\neq \rho%
\end{array}%
\right. \text{,}
\end{equation*}%
for all $x\in
\mathbb{R}
$ and $\rho \geq 0$. Suppose that

\begin{enumerate}
\item $2d(u,u_{0})-d(fu,u_{0})\leq \theta _{\rho }(d(u,u_{0}))+Ld(u,fu)$ for
some $L\in \left( -\infty ,0\right] $ and each $u\in X$,

\item $d(fu,u_{0})\leq \rho $ for each $u\in C_{u_{0},\rho }$,

\item $d(fu,fv)\geq 2\rho $ for each $u,v\in C_{u_{0},\rho }$ and $u\neq v$,

\item $d(fu,fv)<\rho +d(v,fu)$ for each $u,v\in C_{u_{0},\rho }$ and $u\neq
v $,

then the self-mapping $f$ fixes the circle $C_{u_{0},\rho }.$
\end{enumerate}
\end{theorem}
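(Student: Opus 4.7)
The plan is to mimic the proof strategy of Theorem \ref{thm21}, but reverse the role of condition (2): there condition (2) supplies a lower bound $d(fu,u_0)\geq \rho$ that combines with an upper bound obtained from condition (1), whereas here condition (2) provides the upper bound $d(fu,u_0)\leq \rho$ and condition (1), once rearranged, will provide the matching lower bound. I would begin by fixing an arbitrary $u\in C_{u_{0},\rho}$ and noting that $d(u,u_{0})=\rho$, so the auxiliary function evaluates to $\theta_{\rho}(d(u,u_{0}))=\theta_{\rho}(\rho)=\rho$. Substituting into condition (1) gives
\begin{equation*}
2\rho - d(fu,u_{0}) \leq \rho + L\,d(u,fu),
\end{equation*}
which I would rewrite as $d(fu,u_{0})\geq \rho - L\,d(u,fu)$.

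Next I would combine this with condition (2) to obtain the two-sided inequality
\begin{equation*}
\rho - L\,d(u,fu)\leq d(fu,u_{0})\leq \rho,
\end{equation*}
and in particular $-L\,d(u,fu)\leq 0$. Since $L\in(-\infty,0]$, this is the decisive inequality, and I would then split into the two cases $L=0$ and $L<0$ exactly as in the proof of Theorem \ref{thm21}.

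In the case $L\in(-\infty,0)$ we have $-L>0$, so $-L\,d(u,fu)\leq 0$ forces $d(u,fu)=0$ and hence $fu=u$ immediately. The slightly more delicate case is $L=0$: here the two-sided inequality collapses to $d(fu,u_{0})=\rho$, so $fu\in C_{u_{0},\rho}$, but this does not directly give $fu=u$. To handle this I would argue by contradiction: assuming $fu\neq u$, I apply condition (3) with $v=fu$ (permissible since both $u$ and $fu$ lie on $C_{u_{0},\rho}$) to obtain $d(fu,f^{2}u)\geq 2\rho$, and apply condition (4) to get $d(fu,f^{2}u)<\rho + d(fu,fu)=\rho$, producing the contradiction $2\rho\leq d(fu,f^{2}u)<\rho$. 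Thus $fu=u$ in this case as well.

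Combining the two cases, $fu=u$ holds for every $u\in C_{u_{0},\rho}$, which is exactly the claim that $C_{u_{0},\rho}$ is a fixed circle of $f$. I do not expect any real obstacle here; the only subtle point is to resist reading condition (1) as an upper bound on $d(fu,u_{0})$ (as in Theorem \ref{thm21}) and instead recognize that the coefficient $2$ on $d(u,u_{0})$ flips it into a lower bound, which is what makes the argument compatible with the reversed inequality in condition (2).
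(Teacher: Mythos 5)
Your proof is correct and takes essentially the same approach as the paper: you substitute $\theta_\rho(\rho)=\rho$ into condition (1), combine with condition (2) to get $-L\,d(u,fu)\leq 0$, and then split into the cases $L<0$ (immediate) and $L=0$ (where $d(fu,u_0)=\rho$ and conditions (3)--(4) applied with $v=fu$ yield the contradiction $2\rho\leq d(fu,f^{2}u)<\rho$), which is exactly the argument the paper carries out by reducing to the case analysis in its proof of Theorem \ref{thm21}.
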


\begin{proof}
Consider $u\in C_{u_{0},\rho }$ to be an arbitrary point. Using the
conditions (1) and (2), we get%
\begin{equation*}
2d(u,u_{0})-d(fu,u_{0})\leq d(u,u_{0})+Ld(u,fu)\text{,}
\end{equation*}%
\begin{equation*}
2\rho -d(fu,u_{0})\leq \rho +Ld(u,fu)
\end{equation*}%
and
\begin{equation}
\rho \leq d(fu,u_{0})+Ld(u,fu)\leq \rho +Ld(u,fu)\text{.}  \label{eqn23}
\end{equation}%
Similarly to the arguments used in the proof of Theorem \ref{thm21}, a
direct computation shows that the circle $C_{u_{0},\rho }$ is fixed by $f$.
\end{proof}

\begin{remark}
Notice that, if we consider the case $L=-1$ in the condition $(1)$ of
Theorem \ref{thm22} for $u\in C_{u_{0},\rho }$ then we get%
\begin{equation*}
d(u,fu)\leq \theta _{\rho }(d(u,u_{0}))+d(fu,u_{0})-2d(u,u_{0})=\rho
+d(fu,u_{0})-2\rho =\varphi (u)+\varphi (fu)-2\rho \text{.}
\end{equation*}%
Hence the condition $(C1)^{\ast }$ is satisfied. Also, the condition $(2)$
of Theorem \ref{thm22} is contained in the condition $(C2)^{\ast }$.
Therefore, Theorem \ref{thm22} is a special case of Theorem \ref{thm12} in
this case. For the cases $L\in \left( -\infty ,0\right) $, Theorem \ref%
{thm22} is a generalization of Theorem \ref{thm12}.
\end{remark}

Now, we give some illustrative examples.

\begin{example}
\label{exm:24} Consider the usual metric space $\left(
\mathbb{R}
,d\right) $ and the circle $C_{0,1}=\left\{ -1,1\right\} $. Define the map $%
f_{4}:%
\mathbb{R}
\rightarrow
\mathbb{R}
$ as%
\begin{equation*}
f_{4}x=\left\{
\begin{array}{ccc}
\frac{1}{x} & ; & u\in \left\{ -1,1\right\} \\
2x & ; & \text{otherwise}%
\end{array}%
\right. \text{,}
\end{equation*}%
for each $x\in
\mathbb{R}
$, hence $f_{4}$ satisfies the hypothesis of Theorem \ref{thm22} for $L=-%
\frac{1}{2}$. Clearly, $C_{0,1}$ is the fixed circle of $f_{4}$. It is easy
to check that $f_{4}$ does not satisfy the condition $(1)$ of Theorem \ref%
{thm21} for any $L\in \left( -\infty ,0\right] $.
\end{example}

\begin{example}
\label{exm:25} Consider the usual metric space $\left(
\mathbb{R}
,d\right) $ and the circles $C_{0,1}=\left\{ -1,1\right\} $ and $%
C_{1,2}=\left\{ -1,3\right\} $. Define the self-mapping $f_{5}:%
\mathbb{R}
\rightarrow
\mathbb{R}
$ as%
\begin{equation*}
f_{5}x=\left\{
\begin{array}{ccc}
x & ; & x\in C_{0,1}\cup C_{1,2} \\
\alpha x & ; & \text{otherwise}%
\end{array}%
\right. \text{,}
\end{equation*}%
for each $x\in
\mathbb{R}
$ and $\alpha \geq 2$, then $f_{5}$ satisfies the hypothesis of Theorem \ref%
{thm22} for $L=0$ and for each of the circles $C_{0,1}$ and $C_{1,2}$.
Clearly, $C_{0,1}$ and $C_{1,2}$ are the fixed circles of $f_{5}$. Notice
that the fixed circles $C_{0,1}$ and $C_{1,2}$ are not disjoint.
\end{example}

Considering Example \ref{exm:23} and Example \ref{exm:25}, we deduce that a
fixed circle need not to be unique in Theorem \ref{thm21} and Theorem \ref%
{thm22}. If a fixed circle is non unique then two fixed circle of a
self-mapping can be disjoint or not. Next, we prove a theorem where $f$
fixes a unique circle.

\begin{theorem}
\label{thm23} Let $(X,d)$ be a metric space and $f:X\rightarrow X$ be a
self-mapping which fixes the circle $C_{u_{0},\rho }$. If the condition%
\begin{equation}
d(fu,fv)<\max \left\{ d(v,fu),d(v,fv)\right\} \text{,}  \label{eqn24}
\end{equation}%
is satisfied by $f$ for all $u\in C_{u_{0},\rho }$ and $v\in X\setminus
C_{u_{0},\rho }$, then $C_{u_{0},\rho }$ is the unique fixed circle of $f$.
\end{theorem}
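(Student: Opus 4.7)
The plan is to argue by contradiction in the standard way for uniqueness of a fixed set. Suppose that $f$ also fixes some other circle $C_{u_{1},\rho _{1}}$ with $(u_{1},\rho _{1})\neq (u_{0},\rho )$. To use hypothesis (\ref{eqn24}), I need a point of this second circle that lies outside $C_{u_{0},\rho }$, so the first step is to produce a witness $v\in C_{u_{1},\rho _{1}}\setminus C_{u_{0},\rho }$ (otherwise every point of $C_{u_{1},\rho _{1}}$ would have $d(\cdot ,u_{0})=\rho $, and $C_{u_{1},\rho _{1}}$ would be a subset of $C_{u_{0},\rho }$, which is not the situation of a genuinely different fixed circle).

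Having chosen such a $v$, I pick any $u\in C_{u_{0},\rho }$. Since $f$ fixes both circles, $fu=u$ and $fv=v$. Note that $u\neq v$ because $u\in C_{u_{0},\rho }$ while $v\notin C_{u_{0},\rho }$, and in particular $d(u,v)>0$. Plugging these equalities into the hypothesis gives
\begin{equation*}
d(u,v)=d(fu,fv)<\max \{d(v,fu),d(v,fv)\}=\max \{d(v,u),0\}=d(u,v),
\end{equation*}
which is an immediate contradiction. Hence no such second fixed circle can exist.

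The proof is essentially a one-line substitution once the right witness point is in hand, so I expect no serious obstacle. The only subtle point worth stating carefully is the choice of $v$: the hypothesis (\ref{eqn24}) is assumed only for $v\in X\setminus C_{u_{0},\rho }$, so the argument depends on the fact that any \emph{distinct} fixed circle must contribute at least one fixed point lying off of $C_{u_{0},\rho }$. I will phrase the contradiction step so that the reader sees clearly that $u$ is arbitrary on $C_{u_{0},\rho }$ and only one off-circle fixed point $v$ is needed, making the conclusion of uniqueness transparent.
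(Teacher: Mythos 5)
Your proposal is correct and follows essentially the same substitution argument as the paper: take $u\in C_{u_{0},\rho }$ fixed, $v$ a fixed point on the other circle, and derive $d(u,v)<d(u,v)$ from (\ref{eqn24}). In fact you are slightly more careful than the paper's own proof, which applies (\ref{eqn24}) to an arbitrary $v\in C_{u_{1},\mu }$ without first checking that $v\notin C_{u_{0},\rho }$ (the hypothesis is only assumed for such $v$), whereas you explicitly produce a witness $v\in C_{u_{1},\rho _{1}}\setminus C_{u_{0},\rho }$ before substituting.
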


\begin{proof}
Let $C_{u_{1},\mu }$ be another fixed circle of $f$. If we take $u\in
C_{u_{0},\rho }$ and $v\in C_{u_{1},\mu }$ with $u\neq v$, then using the
inequality (\ref{eqn24}), we obtain%
\begin{eqnarray*}
d(u,v) &=&d(fu,fv) \\
&<&\max \left\{ d(v,fu),d(v,fv)\right\} =d(u,v)\text{,}
\end{eqnarray*}%
a contradiction. We have $u=v$ for all $u\in C_{u_{0},\rho }$, $v\in
C_{u_{1},\mu }$ hence $f$ only fixes the circle $C_{u_{0},\rho }.$
\end{proof}

In the following example, we show that the converse of Theorem \ref{thm23}
is not true in general.

\begin{example}
\label{exm:26} Consider the usual metric space $\left(
\mathbb{C}
,d\right) $ and the circle $C_{0,\frac{1}{4}}.$ Define $f_{6}$ on $\mathbb{C}
$ as follows$:$

\begin{equation*}
f_{6}z=\left\{
\begin{array}{ccc}
\frac{1}{16\overline{z}} & \text{if} & z\neq 0 \\
0 & \text{if} & z=0%
\end{array}%
\right. \text{,}
\end{equation*}%
for $z\in
\mathbb{C}
$, where $\overline{z}$ denotes the complex conjugate of $z$. It is not
difficult to see that $C_{0,\frac{1}{4}}$ is the unique fixed circle of $%
f_{6}$ where $f_{6}$ does not satisfy the hypothesis of Theorem \ref{thm23}.
\end{example}

Now, we give the following example as an illustration of Theorem \ref{thm23}.

\begin{example}
\label{exm:27} Let $Y=\left\{ -1,0,1\right\} $ and the metric $d:Y\times
Y\rightarrow \left[ 0,\infty \right) $ be defined by%
\begin{equation*}
d(u,v)=\left\{
\begin{array}{ccc}
0 & ; & u=v \\
\left\vert u\right\vert +\left\vert v\right\vert & ; & u\neq v%
\end{array}%
\right. \text{,}
\end{equation*}%
for all $u\in Y$. If we consider the self-mapping $f_{7}:Y\rightarrow Y$
defined by%
\begin{equation*}
f_{7}u=0\text{,}
\end{equation*}%
for any $u\in Y$, then $C_{1,1}=\left\{ 0\right\} $ is the unique fixed
circle of $f_{7}.$
\end{example}

Next, we present the following interesting theorem that involves the
identity map $I_{X}:X\rightarrow X$ defined by $I_{X}(u)=u$ for all $u\in X.$

\begin{theorem}
\label{thm24} Let $(X,d)$ be a metric space. Consider the map $f$ from $X$
to itself with the fixed circle $C_{u_{0},\rho }$. The self-mapping $f$
satisfies the condition%
\begin{equation}
d(u,fu)\leq \alpha \left[ \max \left\{ d(u,fu),d(u_{0},fu)\right\}
-d(u_{0},fu)\right] \text{,}  \label{identity_con_1}
\end{equation}%
for all $u\in X$ and some $\alpha \in \left( 0,1\right) $ if and only if $%
f=I_{X}$.
\end{theorem}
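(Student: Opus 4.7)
The plan is to handle the two implications separately and exploit the very simple structure of the right-hand side of \eqref{identity_con_1}. For the forward direction, I would assume $f = I_X$ and substitute $fu = u$ into both sides: the left side becomes $d(u,u) = 0$ and the bracketed expression on the right becomes $\max\{0, d(u_0,u)\} - d(u_0,u) = 0$, so the inequality collapses to $0 \leq 0$ and nothing further is needed.

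For the converse, I would fix an arbitrary $u \in X$ and perform a dichotomy on which of $d(u,fu)$ and $d(u_0,fu)$ realizes the maximum. In the case $d(u,fu) \geq d(u_0,fu)$, the max equals $d(u,fu)$, so \eqref{identity_con_1} rearranges to
\[
(1-\alpha)\,d(u,fu) + \alpha\,d(u_0,fu) \leq 0.
\]
Because $\alpha \in (0,1)$, both coefficients are strictly positive and both summands are non-negative, forcing $d(u,fu) = 0$ and therefore $fu = u$. In the complementary case $d(u,fu) < d(u_0,fu)$, the max equals $d(u_0,fu)$, the bracket collapses to zero, and \eqref{identity_con_1} immediately yields $d(u,fu) \leq 0$, again giving $fu = u$. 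Since the two cases exhaust all possibilities and $u$ was arbitrary, one concludes $f = I_X$.

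There is no genuine obstacle; the argument is a two-line dichotomy driven by the observation that $\max\{d(u,fu), d(u_0,fu)\} - d(u_0,fu)$ is either $d(u,fu) - d(u_0,fu)$ or $0$. I would also point out in a brief remark that the hypothesis that $C_{u_0,\rho}$ is a fixed circle of $f$ is not actually invoked anywhere in the converse: the contractive inequality \eqref{identity_con_1} alone is already strong enough to force $f$ to coincide with the identity on $X$, so the fixed-circle assumption functions here only as background setup ensuring the existence of the center $u_0$.
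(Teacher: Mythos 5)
Your proof is correct and follows essentially the same route as the paper's: the same two-case dichotomy on whether $d(u,fu)$ or $d(u_0,fu)$ attains the maximum, with your rearrangement to $(1-\alpha)d(u,fu)+\alpha d(u_0,fu)\leq 0$ being just a cleaner packaging of the paper's contradiction via $d(u,fu)\leq \alpha d(u,fu)$. Your side remark that the fixed-circle hypothesis is never used in the converse is accurate and a fair observation, but it does not change the argument.
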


\begin{proof}
Let $u\in X$ with $fu\neq u$. By inequality (\ref{identity_con_1}), if $%
d(u,fu)\geq d(u_{0},fu)$, then we get
\begin{equation*}
d(u,fu)\leq \alpha \left[ d(u,fu)-d(u_{0},fu)\right] \leq \alpha d(u,fu)%
\text{,}
\end{equation*}%
which leads us to a contradiction due to the fact that $\alpha \in \left(
0,1\right) $. If $d(u,fu)\leq d(u_{0},fu)$, then we find%
\begin{equation*}
d(u,fu)\leq \alpha \left[ d(u_{0},fu)-d(u_{0},fu)\right] =0.
\end{equation*}

Hence, $fu=u$ and that is $f=I_{X}$ since $u$ is an arbitrary in $X$.

Conversely, $I_{X}$ satisfies the condition (\ref{identity_con_1}) clearly.
\end{proof}

\begin{corollary}
Let $(X,d)$ be a metric space and $f:X\rightarrow X$ be a self-mapping. If $%
f $ satisfies the hypothesis of Theorem \ref{thm21} $($resp. Theorem \ref%
{thm22}$)$ but the condition $($\ref{identity_con_1}$)$ is not satisfied,
then $f\neq I_{X}$.
\end{corollary}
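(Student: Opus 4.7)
The plan is to observe that this corollary is a one-step consequence of Theorem \ref{thm24}. The hypothesis of Theorem \ref{thm21} (respectively Theorem \ref{thm22}) is exactly what is needed to conclude that $f$ fixes the circle $C_{u_{0},\rho }$, so the standing hypothesis of Theorem \ref{thm24}, namely that $f$ has a fixed circle $C_{u_{0},\rho }$, is automatically in force.

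First I would invoke Theorem \ref{thm21} (or Theorem \ref{thm22}) to assert that $C_{u_{0},\rho }$ is a fixed circle of $f$. Then Theorem \ref{thm24} gives the biconditional that $f$ satisfies (\ref{identity_con_1}) if and only if $f=I_{X}$. Taking the contrapositive of the forward implication: if (\ref{identity_con_1}) fails for $f$, then $f\neq I_{X}$. That finishes the argument.

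There is no real obstacle here; the only thing to be careful about is spelling out that the use of Theorem \ref{thm24} is legitimate, i.e.\ that $f$ really does admit $C_{u_{0},\rho }$ as a fixed circle, which is supplied by whichever of Theorems \ref{thm21}, \ref{thm22} is being assumed. So the corollary amounts to nothing more than a two-line contrapositive argument chained to one of the two fixed-circle theorems proved earlier in this section.
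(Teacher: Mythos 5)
Your argument is correct and is exactly the intended one: the paper states this corollary without proof, as an immediate consequence of chaining Theorem \ref{thm21} (resp.\ Theorem \ref{thm22}), which supplies the standing hypothesis that $C_{u_{0},\rho }$ is a fixed circle of $f$, with the biconditional of Theorem \ref{thm24}. One minor remark: the implication you actually need is the contrapositive of the easy direction $f=I_{X}\Rightarrow (\ref{identity_con_1})$, not of the forward direction $(\ref{identity_con_1})\Rightarrow f=I_{X}$, but since Theorem \ref{thm24} is a full equivalence this mislabeling does not affect the validity of the proof.
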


Now, we rewrite the following theorem given in \cite{Ozgur-malaysian}.

\begin{theorem}
\cite{Ozgur-malaysian} \label{thm25} Let $(X,d)$ be a metric space. Consider
the map $f$ from $X$ to itself which have a fixed circle $C_{u_{0},\rho }$
and $\varphi $ as in $($\ref{funtion_phi}$)$. Then $f$ satisfies the
condition%
\begin{equation}
d(u,fu)\leq \frac{\varphi (u)-\varphi (fu)}{h}\text{,}  \label{identity2}
\end{equation}%
for every $u\in Y$ and $h>1$ if and only if $f=I_{X}$.
\end{theorem}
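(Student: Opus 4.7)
The plan is to establish the biconditional by two separate implications, the backward one being trivial and the forward one reducing to a single application of the reverse triangle inequality.

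For the backward direction, I would simply note that if $f=I_{X}$, then $d(u,fu)=0$ and $\varphi(u)-\varphi(fu)=d(u,u_{0})-d(u,u_{0})=0$ for every $u\in X$, so the required inequality $0\le 0/h$ holds trivially for every $h>1$.

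For the forward direction, assume $f$ satisfies $d(u,fu)\le \bigl(\varphi(u)-\varphi(fu)\bigr)/h$ for every $u\in X$ and some $h>1$, and let $u\in X$ be arbitrary; the goal is to show $fu=u$. The key step is to bound the right-hand side from above by $d(u,fu)/h$. This follows from the reverse triangle inequality: since
$$
\varphi(u)-\varphi(fu)\;=\;d(u,u_{0})-d(fu,u_{0})\;\le\;d(u,fu),
$$
chaining with the hypothesis gives $d(u,fu)\le d(u,fu)/h$, i.e., $(h-1)\,d(u,fu)\le 0$. Because $h>1$ and $d(u,fu)\ge 0$, this forces $d(u,fu)=0$, hence $fu=u$. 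Since $u$ was arbitrary, $f=I_{X}$.

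The main step that does the work is the observation that $\varphi$ is $1$-Lipschitz with respect to $d$, so the divisor $h>1$ in the hypothesis is strictly too strong unless both sides vanish. One thing worth noting — and the only real subtlety to sanity-check — is that the hypothesis forces $\varphi(u)-\varphi(fu)\ge 0$ automatically (the left-hand side $d(u,fu)$ is non-negative and $h>0$), so we never need to worry about a sign ambiguity when applying the reverse triangle inequality. The existence of the fixed circle $C_{u_{0},\rho}$ plays no role in the computation; it serves only as the ambient hypothesis under which the identity-characterization is stated, consistent with the fact that $I_{X}$ fixes every circle centred at $u_{0}$.
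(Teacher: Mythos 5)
Your proof is correct and is the standard argument for this statement (which the paper only quotes from \cite{Ozgur-malaysian} without reproducing a proof): the backward direction is trivial, and the forward direction follows from the reverse triangle inequality $\varphi(u)-\varphi(fu)=d(u,u_{0})-d(fu,u_{0})\leq d(u,fu)$ combined with $h>1$ to force $d(u,fu)=0$. Nothing is missing; your observation that the fixed-circle hypothesis plays no role in the computation is also accurate.
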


\begin{theorem}
\label{thm26} Let $(X,d)$ be a metric space. Consider the map $f$ from $X$
to itself which have a fixed circle $C_{u_{0},\rho }$ and $\varphi $ as in $%
( $\ref{funtion_phi}$)$. Then $f$ satisfies $($\ref{identity_con_1}$)$ if
and only if $f$ satisfies $($\ref{identity2}$)$.
\end{theorem}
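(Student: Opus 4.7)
The plan is to chain Theorem \ref{thm24} and Theorem \ref{thm25} through their common characterization, namely $f = I_X$. Both conditions (\ref{identity_con_1}) and (\ref{identity2}) have already been shown to be equivalent to the statement ``$f$ is the identity map'', under exactly the same background hypotheses (metric space $(X,d)$, self-mapping $f$ possessing a fixed circle $C_{u_0,\rho}$). Therefore, proving the biconditional reduces to transitivity of these two equivalences.

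More concretely, I would proceed in two directions. For the forward direction, assume $f$ satisfies (\ref{identity_con_1}). By Theorem \ref{thm24} applied with the chosen $\alpha \in (0,1)$, we conclude $f = I_X$. Since $I_X$ trivially satisfies $d(u,fu) = 0 = \varphi(u) - \varphi(fu)$, the inequality (\ref{identity2}) holds for any $h > 1$. For the converse, assume $f$ satisfies (\ref{identity2}) for some $h > 1$; then Theorem \ref{thm25} yields $f = I_X$, and again the identity trivially verifies (\ref{identity_con_1}) for every $\alpha \in (0,1)$, since the left-hand side is zero.

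There is no real obstacle, since Theorems \ref{thm24} and \ref{thm25} do the heavy lifting. The only minor subtlety is making sure that the choices of the parameters $\alpha$ and $h$ are quantified correctly: both Theorem \ref{thm24} and Theorem \ref{thm25} give ``there exists a parameter such that the inequality holds $\iff$ $f=I_X$'', and when $f=I_X$ the respective inequalities hold for \emph{every} admissible value of the parameter, so no compatibility issue arises between the $\alpha$ coming from one direction and the $h$ needed for the other. The proof is therefore essentially a one-line argument invoking the two preceding theorems in sequence.
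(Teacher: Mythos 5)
Your proof is correct and is essentially the argument the paper intends: the paper's own proof is just ``The proof follows easily,'' which can only mean chaining Theorems \ref{thm24} and \ref{thm25} through the common characterization $f=I_{X}$, exactly as you do. Your remark on the quantification of $\alpha$ and $h$ (existential in the hypotheses, universal once $f=I_{X}$) is a worthwhile detail that the paper leaves implicit.
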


\begin{proof}
The proof follows easily.
\end{proof}

\section{$F_{c}$-\textbf{contractive and }$F_{c}$-\textbf{expanding mappings
in metric spaces}}

\label{sec:2}

In this section, we use a different approach to obtain new fixed-circle
results. First, we recall the definition of the following family of
functions which was introduced by Wardowski in \cite{Wardowski}.

\begin{definition}
\cite{Wardowski} \label{def31} Let $\mathbb{F}$ be the family of all
functions $F:(0,\infty )\rightarrow
\mathbb{R}
$ such that

$(F_{1})$ $F$ is strictly increasing,

$(F_{2})$ For each sequence $\left\{ \alpha _{n}\right\} $ in $\left(
0,\infty \right) $ the following holds%
\begin{equation*}
\underset{n\rightarrow \infty }{\lim }\alpha _{n}=0\text{ if and only if }%
\underset{n\rightarrow \infty }{\lim }F(\alpha _{n})=-\infty \text{,}
\end{equation*}

$(F_{3})$ There exists $k\in (0,1)$ such that $\underset{\alpha \rightarrow
0^{+}}{\lim }\alpha ^{k}F(\alpha )=0$.
\end{definition}

Some examples of functions that satisfies the conditions $(F_{1})$, $(F_{2})$
and $(F_{3})$ of Definition \ref{def31} are $F(u)=\ln (u)$, $F(u)=\ln (u)+u$%
, $F(u)=-\frac{1}{\sqrt{u}}$ and $F(u)=\ln (u^{2}+u)$ (see \cite{Wardowski}
for more details).

At this point, we introduce the following new contraction type.

\begin{definition}
\label{def32} Let $(X,d)$ be a metric space and $f$ be a self-mapping on $X$%
. If there exist $t>0$, $F\in \mathbb{F}$ and $u_{0}\in X$ such that
\begin{equation*}
d(u,fu)>0\Rightarrow t+F(d(u,fu))\leq F(d(u_{0},u))\text{,}
\end{equation*}%
for all $u\in X$, then $f$ is called as an $F_{c}$-contraction.
\end{definition}

We note that the point $u_{0}$ mentioned in Definition \ref{def32} must be a
fixed point of the mapping $f$. Indeed, if $u_{0}$ is not a fixed point of $%
f $, then we have $d(u_{0},fu_{0})>0$ and hence%
\begin{equation*}
d(u_{0},fu_{0})>0\Rightarrow t+F(d(u_{0},fu_{0}))\leq F(d(u_{0},u_{0}))\text{%
.}
\end{equation*}%
This is a contradiction since the domain of $F$ is $(0,\infty )$.
Consequently, we obtain the following proposition as an immediate
consequence of Definition \ref{def32}.

\begin{proposition}
\label{prop31} Let $(X,d)$ be a metric space. If $f$ is an $F_{c}$%
-contraction with $u_{0}\in $ $X$ then we have $fu_{0}=u_{0}.$
\end{proposition}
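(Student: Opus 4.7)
The plan is a direct proof by contradiction, essentially formalizing the heuristic remark made by the authors in the paragraph preceding the proposition. The whole argument hinges on the fact that the function $F$ in the definition of $\mathbb{F}$ has domain $(0,\infty)$, so evaluating $F$ at $0$ is meaningless.

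First I would suppose, for contradiction, that $fu_0 \neq u_0$. Then by the definition of a metric, $d(u_0, fu_0) > 0$. The next step is to instantiate the defining implication of an $F_c$-contraction at the specific point $u = u_0$: since $d(u_0, fu_0) > 0$, the hypothesis of the implication in Definition \ref{def32} is triggered, and we obtain
\begin{equation*}
t + F(d(u_0, fu_0)) \leq F(d(u_0, u_0)).
\end{equation*}
But $d(u_0, u_0) = 0$, so the right-hand side is $F(0)$, which is not defined because $F$ has domain $(0,\infty)$. This is the desired contradiction, forcing $fu_0 = u_0$.

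There is really no technical obstacle here; the only thing to be careful about is to state the argument so that the contradiction is visibly a statement about the domain of $F$ rather than a vacuous inequality. One could, alternatively, phrase the conclusion by saying that for the implication in Definition \ref{def32} to make sense when $u = u_0$, its premise $d(u_0, fu_0) > 0$ must fail, so $d(u_0, fu_0) = 0$ and hence $fu_0 = u_0$. Either formulation occupies just a few lines.
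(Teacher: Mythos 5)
Your argument is correct and is essentially identical to the paper's own justification, which appears in the paragraph immediately preceding the proposition: assume $fu_0\neq u_0$, trigger the $F_c$-contraction implication at $u=u_0$, and derive a contradiction from the fact that $F(d(u_0,u_0))=F(0)$ is undefined because the domain of $F$ is $(0,\infty)$. Nothing is missing.
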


Using this new type contraction we give the following fixed-circle theorem.

\begin{theorem}
\label{thm31} Let $(X,d)$ be a metric space and $f$ be an $F_{c}$%
-contraction with $u_{0}\in $ $X$. Define the number $\sigma $ by
\begin{equation*}
\sigma =\inf \left\{ d(u,fu):u\neq fu,u\in X\right\} \text{.}
\end{equation*}%
Then $C_{u_{0},\sigma }$ is a fixed circle of $f$. In particular, $f$ fixes
every circle $C_{u_{0},r}$ where $r<\sigma $.
\end{theorem}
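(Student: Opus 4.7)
The plan is to exploit two ingredients simultaneously: the strict monotonicity of $F$ guaranteed by $(F_{1})$, and the fact that $\sigma$ is an infimum taken over the non-fixed points of $f$. Proposition \ref{prop31} already tells us that the center $u_{0}$ is fixed, so the work lies entirely in ruling out non-fixed points on the circle $C_{u_{0},\sigma}$ and on the smaller circles $C_{u_{0},r}$ with $r<\sigma$.

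First I would fix an arbitrary $u\in C_{u_{0},\sigma}$ and argue by contradiction: suppose $u\neq fu$, so that $d(u,fu)>0$ and the $F_{c}$-contraction condition in Definition \ref{def32} actually applies. It gives
\[
t+F(d(u,fu))\leq F(d(u_{0},u))=F(\sigma).
\]
Since $t>0$, we get $F(d(u,fu))<F(\sigma)$, and strict monotonicity of $F$ from $(F_{1})$ then forces $d(u,fu)<\sigma$. But $u$ is itself a non-fixed point of $f$, so by the very definition of $\sigma$ as an infimum over such points we must have $d(u,fu)\geq \sigma$, a contradiction. Hence $fu=u$ for every $u\in C_{u_{0},\sigma}$, which is precisely the fixed-circle conclusion.

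For the ``in particular'' clause, I would repeat essentially the same computation with a radius $r<\sigma$ in place of $\sigma$: for any $u\in C_{u_{0},r}$ with $u\neq fu$, Definition \ref{def32} yields $F(d(u,fu))\leq F(r)-t<F(r)$, so strict monotonicity gives $d(u,fu)<r<\sigma$, again contradicting the infimum definition of $\sigma$. Therefore $fu=u$ throughout $C_{u_{0},r}$.

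I do not foresee a real obstacle in this argument; the whole theorem boils down to a monotonicity-plus-infimum squeeze using the $F_{c}$-contraction inequality exactly once, after Proposition \ref{prop31} has disposed of the center. The only mild subtlety worth flagging is the degenerate case in which $f$ has no non-fixed points at all: then $\sigma=+\infty$ under the usual convention and $f=I_{X}$, so every circle centered at $u_{0}$ is trivially fixed and no separate argument is needed.
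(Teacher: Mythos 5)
Your proposal is correct and follows essentially the same route as the paper's proof: assume a point of the circle is not fixed, apply the $F_{c}$-contraction inequality together with the strict monotonicity of $F$ to conclude $d(u,fu)<\sigma$, and contradict the definition of $\sigma$ as an infimum (and likewise for $r<\sigma$). The only cosmetic difference is that the paper isolates the degenerate case $\sigma=0$ (where $C_{u_{0},\sigma}=\{u_{0}\}$ and Proposition \ref{prop31} finishes), which your appeal to Proposition \ref{prop31} at the outset covers equally well.
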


\begin{proof}
If $\sigma =0$ then clearly $C_{u_{0},\sigma }=\left\{ u_{0}\right\} $ and
by Proposition \ref{prop31}, we see that $C_{u_{0},\sigma }$ is a fixed
circle of $f$. Assume $\sigma >0$ and let $u\in C_{u_{0},\sigma }$. If $%
fu\neq u$, then by the definition of $\sigma $ we have $d(u,fu)\geq \sigma $%
. Hence using the $F_{c}$-contractive property and the fact that $F$ is
increasing, we obtain%
\begin{equation*}
F(\sigma )\leq F(d(u,fu))\leq F(d(u_{0},u))-t<F(d(u_{0},u))=F(\sigma )\text{,%
}
\end{equation*}%
which leads to a contradiction. Therefore, we have $d(u,fu)=0$, that is, $%
fu=u$. Consequently, $C_{u_{0},\sigma }$ is a fixed circle of $f$.

Now we show that $f$ also fixes any circle $C_{u_{0},r}$ with $r<\sigma $.
Let $u\in C_{u_{0},r}$ and assume that $d(u,fu)>0$. By the $F_{c}$%
-contractive property, we have%
\begin{equation*}
F(d(u,fu))\leq F(d(u_{0},u))-t<F(r)\text{.}
\end{equation*}%
Since $F$ is increasing, then we find%
\begin{equation*}
d(u,fu)<r<\sigma .
\end{equation*}%
But $\sigma =\inf \left\{ d(u,fu):\text{for all }u\neq fu\right\} $, which
leads us to a contradiction. Thus, $d(u,fu)=0$ and $fu=u$. Hence, $%
C_{u_{0},r}$ is a fixed circle of $f$.
\end{proof}

\begin{remark}
\label{rem31} $1)$ Notice that, in Theorem \ref{thm31}, the $F_{c}$%
-contraction $f$ fixes the disc $D_{u_{0},\sigma }$. Therefore, the center
of any fixed circle is also fixed by $f$. In Theorem \ref{aip}, the
self-mapping $f$ maps $C_{u_{0},\rho }$ into $($or onto$)$ itself, but the
center of the fixed circle need not to be fixed by $f$.

$2)$ Related to the number of the elements of the set $X$, the number of the
fixed circles of an $F_{c}$-contractive self-mapping $f$ can be infinite $($%
see Example \ref{exm:33}$)$.
\end{remark}

We give some illustrative examples.

\begin{example}
\label{exm:31} Let $X=\left\{ 0,1,e^{2},-e^{2},e^{2}-1,e^{2}+1\right\} $ be
the metric space with the usual metric. Define the self-mapping $%
f_{8}:X\rightarrow X$ as%
\begin{equation*}
f_{8}u=\left\{
\begin{array}{ccc}
1 & ; & u=0 \\
u & ; & \text{otherwise}%
\end{array}%
\right. \text{,}
\end{equation*}%
for all $u\in X$. Then the self-mapping $f_{8}$ is an $F_{c}$-contractive
self-mapping with $F=\ln u$, $t=1$ and $u_{0}=e^{2}$. Using Theorem \ref%
{thm31}, we obtain $\sigma =1$ and $f_{8}$ fixes the circle $%
C_{e^{2},1}=\left\{ e^{2}-1,e^{2}+1\right\} $. Clearly, $\mathbb{C}_{8}$
fixes the disc $D_{e^{2},1}=\left\{ u\in Y:d(u,e^{2})\leq 1\right\} =\left\{
e^{2},e^{2}-1,e^{2}+1\right\} $. Notice that $f_{8}$ fixes also the circle $%
C_{0,e^{2}}=\left\{ -e^{2},e^{2}\right\} .$
\end{example}

The converse statement of Theorem \ref{thm31} is not always true as seen in
the following example.

\begin{example}
\label{exm:32} Let $(X,d)$ be a metric space, $u_{0}\in X$ any point and the
self-mapping $f_{9}:X\rightarrow X$ defined as%
\begin{equation*}
f_{9}u=\left\{
\begin{array}{ccc}
u & ; & d(u,u_{0})\leq \mu \\
u_{0} & ; & d(u,u_{0})>\mu%
\end{array}%
\right. \text{,}
\end{equation*}%
for all $u\in X$ with any $\mu >0$. Then it can be easily seen that $f_{9}$
is not an $F_{c}$-contractive self-mapping for the point $u_{0}$ but $f_{9}$
fixes every circle $C_{u_{0},r}$ where $r\leq \mu $.
\end{example}

\begin{example}
\label{exm:33} Let $\left(
\mathbb{C}
,d\right) $ be the usual metric space and define the self-mapping $f_{10}:%
\mathbb{C}
\rightarrow
\mathbb{C}
$ as%
\begin{equation*}
f_{10}u=\left\{
\begin{array}{ccc}
u & ; & \left\vert u\right\vert <2 \\
u+1 & ; & \left\vert u\right\vert \geq 2%
\end{array}%
\right. \text{,}
\end{equation*}%
for all $u\in
\mathbb{C}
$. We have $\sigma =\min \left\{ d(u,f_{10}u):u\neq f_{10}u\right\} =1$.
Then $f_{10}$ is an $F_{c}$-contractive self-mapping with $F=\ln u$, $t=\ln
2 $ and $u_{0}=0\in
\mathbb{C}
$. Evidently, the number of the fixed circles of $f_{10}$ is infinite.
\end{example}

Now, to obtain a new fixed-circle theorem, we use the well-known fact that
if a self-mapping $f$ on $X$ is surjective, then there exists a self mapping
$f^{\ast }:X\rightarrow X$ such that the map $(f\circ f^{\ast })$ is the
identity map on $X$.

\begin{definition}
\label{def33} A self-mapping $f$ on a metric space $X$ is called as an $%
F_{c} $-expanding map if there exist $t<0$, $F\in \mathbb{F}$ and $u_{0}\in
X $ such that
\begin{equation*}
d(u,fu)>0\Rightarrow F(d(u,fu))\leq F(d(u_{0},fu))+t\text{,}
\end{equation*}%
for all $u\in X$.
\end{definition}

\begin{theorem}
\label{thm32} Let $(X,d)$ be a metric space. If $f:X\rightarrow X$ is a
surjective $F_{c}$-expanding map with $u_{0}\in X$, then $f$ has a fixed
circle in $X.$
\end{theorem}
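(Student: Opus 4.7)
My plan is to reduce Theorem \ref{thm32} to Theorem \ref{thm31} by passing to a right inverse. Since $f$ is surjective, I can invoke the remark immediately preceding the statement to obtain a self-mapping $f^{\ast}\colon X\to X$ with $f\circ f^{\ast}=I_{X}$. The goal is then to show that $f^{\ast}$ is an $F_{c}$-contraction with the same base point $u_{0}$, so that Theorem \ref{thm31} produces a fixed circle of $f^{\ast}$, and finally to transfer this fixed circle to $f$.

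First I would observe that the base point $u_{0}$ appearing in the $F_{c}$-expanding condition is automatically fixed by $f$: if $d(u_{0},fu_{0})>0$, then the defining implication with $u=u_{0}$ would give $F(d(u_{0},fu_{0}))\le F(d(u_{0},fu_{0}))+t$ with $t<0$, which is impossible. Hence $fu_{0}=u_{0}$, which means $u_{0}\in f^{-1}(\{u_{0}\})$; thus I may choose the right inverse $f^{\ast}$ so that $f^{\ast}u_{0}=u_{0}$.

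Next, the main step: I show $f^{\ast}$ is an $F_{c}$-contraction with parameter $t':=-t>0$. For any $u\in X$, set $v=f^{\ast}u$; then $fv=u$. If $d(u,f^{\ast}u)=d(v,fv)>0$, the $F_{c}$-expanding inequality applied to $v$ yields
\begin{equation*}
F\bigl(d(u,f^{\ast}u)\bigr)=F\bigl(d(v,fv)\bigr)\le F\bigl(d(u_{0},fv)\bigr)+t=F\bigl(d(u_{0},u)\bigr)+t,
\end{equation*}
which rearranges to $t'+F(d(u,f^{\ast}u))\le F(d(u_{0},u))$ with $t'>0$. This is exactly the $F_{c}$-contractive condition of Definition \ref{def32} for $f^{\ast}$ at the point $u_{0}$.

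Finally, I apply Theorem \ref{thm31} to $f^{\ast}$: defining $\sigma=\inf\{d(u,f^{\ast}u):u\neq f^{\ast}u,\ u\in X\}$, the circle $C_{u_{0},\sigma}$ is fixed by $f^{\ast}$. To transfer this back to $f$, note that if $u\in C_{u_{0},\sigma}$ satisfies $f^{\ast}u=u$, then $fu=f(f^{\ast}u)=u$. Hence $C_{u_{0},\sigma}$ is a fixed circle of $f$. The main conceptual obstacle is recognizing that a right inverse of an $F_{c}$-expanding map is automatically an $F_{c}$-contraction with the sign of $t$ reversed; the rest is bookkeeping, the only care needed being in choosing $f^{\ast}$ compatibly with the base point $u_{0}$.
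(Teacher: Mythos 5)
Your proposal is correct and follows essentially the same route as the paper's own proof: use surjectivity to obtain a right inverse $f^{\ast}$, show that the $F_{c}$-expanding inequality for $f$ at $v=f^{\ast}u$ makes $f^{\ast}$ an $F_{c}$-contraction with parameter $-t>0$, invoke Theorem \ref{thm31} to get the fixed circle $C_{u_{0},\sigma}$ of $f^{\ast}$, and transfer it to $f$ via $fu=f(f^{\ast}u)=u$. Your additional observation that $fu_{0}=u_{0}$ and that $f^{\ast}$ can be chosen with $f^{\ast}u_{0}=u_{0}$ is a harmless (and slightly tidier) refinement not present in the paper.
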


\begin{proof}
Since $f$ is surjective, we know that there exists a self-mapping $f^{\ast
}:X\rightarrow X,$ such that the map $(f\circ f^{\ast })$ is the identity
map on $X$. Let $u\in X$ be such that $d(u,f^{\ast }u)>0$ and $z=f^{\ast }u$%
. First, notice the following fact
\begin{equation*}
fz=f(f^{\ast }u)=(f\circ f^{\ast })u=u\text{.}
\end{equation*}%
Since
\begin{equation*}
d(z,fz)=d(fz,z)>0\text{,}
\end{equation*}

now, by applying the $F_{c}$-expanding property of $f$ we get%
\begin{equation*}
F\left( d(z,fz)\right) \leq F(d(u_{0},fz))+t
\end{equation*}%
and
\begin{equation*}
F\left( d(f^{\ast }u,u)\right) \leq F(d(u_{0},u))+t\text{.}
\end{equation*}%
Therefore, we obtain
\begin{equation*}
-t+F\left( d(f^{\ast }u,u)\right) \leq F(d(u_{0},u))\text{.}
\end{equation*}%
Consequently, $f^{\ast }$ is an $F_{c}$-contraction on $X$ with $u_{0}$ as $%
-t>0$. Then by Theorem \ref{thm31}, $f^{\ast }$ has a fixed circle $%
C_{u_{0},\sigma }$. Let $v\in C_{u_{0},\sigma }$ be any point. Using the
fact that
\begin{equation*}
fv=f(f^{\ast }v)=v\text{,}
\end{equation*}%
we deduce that $fv=v$, that is $v$ is a fixed point of $f$, which implies
that $f$ also fixes $C_{u_{0},\sigma }$, as required.
\end{proof}

\begin{example}
Let $X=\left\{ 1,2,3,4,5\right\} $ with the usual metric. Define the
self-mapping $f_{11}:X\rightarrow X$ by%
\begin{equation*}
f_{11}u=\left\{
\begin{array}{ccc}
2 & ; & u=1 \\
1 & ; & u=2 \\
u & ; & u\in \left\{ 3,4,5\right\}%
\end{array}%
\right. \text{.}
\end{equation*}%
$f_{11}$ is a surjective $F_{c}$-expanding map with $u_{0}=4$, $F(u)=\ln u$
and $t=-\ln 2$. We have%
\begin{equation*}
\sigma =\min \left\{ d\left( u,fu\right) :u\neq fu,u\in X\right\} =1
\end{equation*}%
and the circle $C_{4,1}=\left\{ 3,5\right\} $ is the fixed circle of $f$.
\end{example}

\begin{remark}
If $f$ is not a surjective map, then the result in Theorem \ref{thm32} is
not true everywhen. For example, let $X=\left\{ 1,2,3,4\right\} $ with the
usual metric $d.$ Define the self-mapping $f_{12}:X\rightarrow X$ by
\begin{equation*}
f_{12}u=\left\{
\begin{array}{ccc}
2 & ; & u\in \left\{ 1,3\right\} \\
1 & ; & u=2 \\
4 & ; & u=4%
\end{array}%
\right. \text{.}
\end{equation*}%
Then, it is easy to check that $f_{12}$ satisfies the condition
\begin{equation*}
d(u,fu)>0\Rightarrow F(d(u,fu))\leq F(d(u_{0},fu))+t
\end{equation*}%
for all$\ u\in X$, with $F\left( u\right) =\ln u$, $u_{0}=4$ and $t=-\ln 2$.
Therefore, $f_{12}$ satisfies all the conditions of Theorem \ref{thm32},
except that $f_{12}$ is not surjective. Notice that $\sigma =1$ and $f_{12}$
does not fix the circle $C_{4,1}$.
\end{remark}

\section{\textbf{Fixed point sets of activation functions}}

\label{sec:3}

Activation functions are the primary neural networks decision-making units
in a neural network and hence it is critical to choose the most appropriate
activation function for neural network analysis \cite{Szandala}.
Characteristic properties of activation functions play an important role in
learning and stability issues of a neural network. A comprehensive analysis
of different activation functions with individual real-world applications
was given in \cite{Szandala}. We note that the fixed point sets of commonly
used activation functions (e.g. Ramp function, ReLU function, Leaky ReLU
function) contain some fixed discs and fixed circles. For example, let us
consider the Leaky ReLU function defined by%
\begin{equation*}
f(x)=\max (kx,x)=\left\{
\begin{array}{ccc}
kx & ; & x\leq 0 \\
x & ; & x>0%
\end{array}%
\right. \text{,}
\end{equation*}%
where $k\in \left[ 0,1\right] $. In \cite{Xu}, the Leaky-Reluplex algorithm
was proposed to verify Deep Neural Networks (DNNs) with Leaky ReLU
activation function (see \cite{Xu} for more details). Now we consider the
fixed point set of the Leaky ReLU activation function by a geometric
viewpoint. Let $\rho =u_{0}\in \left( 0,\infty \right) $ be any positive
number and consider the circle $C_{u_{0},\rho }=\left\{ 0,2u_{0}\right\} $.
Then it is easy to check that the function $f(x)$ satisfies the conditions
of Theorem \ref{thm21} for the circle $C_{u_{0},\rho }$ with $L=0$. Clearly,
the circle $C_{u_{0},\rho }$ is a fixed circle of $f(x)$ and the center of
the fixed circle is also fixed by $f(x)$.

On the other hand, theoretic fixed point theorems have been extensively used
in the study of neural networks. For example, in \cite{Li}, the existence of
a fixed point for every recurrent neural network was shown and a geometric
approach was used to locate where the fixed points are. Brouwer's Fixed
Point Theorem was used to ensure the existence of a fixed point. This study
shows the importance of the geometric viewpoint and theoretic fixed point
results in applications. Obviously, our fixed circle and fixed disc results
are important for future studies in the study of neural networks.

\section{\textbf{Conclusion and future works}}

\label{sec:4}

In this section, we want to bring to the reader's attention in connection
with the investigation of some open questions. Concerning the geometry of
non unique fixed points of a self-mapping on a metric space, we have
obtained new geometric (fixed-circle or fixed-disc) results. To do this, we
use two different approaches. One of them is to measure whether a given
circle is fixed or not by a self-mapping. Another approach is to find which
circle is fixed by a self-mapping under some contractive or expanding
conditions. The investigation of new conditions which ensure a circle or a
disc to be fixed by a self-mapping can be considered as a future problem.
For a self-mapping of which fixed point set contains a circle or a disc, new
contractive or expanding conditions can also be investigated.

On the other hand, there are some examples of self-mappings which have a
common fixed circle. For example, let $\left(
\mathbb{R}
,d\right) $ be the usual metric space and consider the circle $%
C_{0,1}=\left\{ -1,1\right\} $. We define the self-mappings $f_{13}:%
\mathbb{R}
\rightarrow
\mathbb{R}
$ and $f_{14}:%
\mathbb{R}
\rightarrow
\mathbb{R}
$\ as%
\begin{equation*}
f_{13}x=\left\{
\begin{array}{ccc}
\frac{1}{x} & ; & x\in \left\{ -1,1\right\} \\
0 & ; & \text{otherwise}%
\end{array}%
\right. \text{ and }f_{14}x=\frac{5x+3}{3x+5}\text{,}
\end{equation*}%
for each $x\in
\mathbb{R}
$, respectively. Then both the self-mappings $f_{13}$ and $f_{14}$ fixes the
circle $C_{0,1}=\left\{ -1,1\right\} $, that is, the circle $C_{0,1}=\left\{
-1,1\right\} $ is a common fixed circle of the self-mappings $f_{13}$ and $%
f_{14}$. At this point, the following question can be left as a future study.

\begin{question}
What is (are) the condition(s) to make any circle $C_{u_{0},\rho }$ as the
common fixed circle for two (or more than two) self-mappings?
\end{question}

Finally, the problems considered in this paper can also be studied on some
generalized metric spaces. For example, the notion of an $M_{s}$-metric
space was introduced in \cite{M4}.

\begin{notation}
We use the following notations.

1. $m_{{s}_{u,v,z}}:=min\{m_{s}(u,u,u),m_{s}(v,v,v),m_{s}(z,z,z)\}$

2. $M_{{s}_{u,v,z}}:=max\{m_{s}(u,u,u),m_{s}(v,v,v),m_{s}(z,z,z)\}$
\end{notation}

\begin{definition}
An $M_{s}$-metric on a nonempty set $Y$ is a function $m_{s}:Y^{3}%
\rightarrow
\mathbb{R}
^{+}$ if for all $u,v,z,t\in Y$ we have

\begin{enumerate}
\item $m_{s}(u,u,u)=m_{s}(v,v,v)=m_{s}(z,z,z)=m_{s}(u,v,z)%
\Longleftrightarrow u=v=z,$

\item $m_{{s}_{u,v,z}}\leq m_{s}(u,v,z),$

\item $m_{s}(u,u,v)=m_{s}(v,v,u),$

\item
\begin{align*}
(m_{s}(u,v,z)-m_{{s}_{u,v,z}})\leq & (m_{s}(u,u,t)-m_{{s}_{u,u,t}}) \\
& +(m_{s}(v,v,t)-m_{{s}_{v,v,t}})+(m_{s}(z,z,t)-m_{{s}_{z,z,t}}).
\end{align*}
\end{enumerate}

Then the pair $(Y,m_{s})$ is called an $M_{s}$-metric space.
\end{definition}

One can consult \cite{M4} for some examples and basic notions of an $M_{s}$%
-metric space.

In $M_{s}$-metric spaces we define a circle as follow;
\begin{equation*}
C_{u_{0},\rho }=\{u\in Y\mid m_{s}(u_{0},u,u)-m_{{s}_{{u_{0},u,u}}}=\rho \}.
\end{equation*}

\begin{question}
Let $(Y,m_{s})$ be an $M_{s}$-metric space, $k>1$ and $f$ be a surjective
self-mapping on $Y$. Let we have
\begin{equation*}
m_{s}(u,fu,f^{2}u)\leq km_{s}(u_{0},u,fu),
\end{equation*}%
for every $u\in Y$ and some $u_{0}\in Y$. Does $f$ have point circle on $Y?$
\end{question}

\begin{question}
Let $(Y,m_{s})$ be an $M_{s}$-metric space, $t>0$, $F\in \mathbb{F}$ and $f$
be a surjective self-mapping on $Y$. Let we have
\begin{equation*}
m_{s}(u,fu,f^{2}u)>0\Rightarrow F(m_{s}(u,fu,f^{2}u))\geq
F(m_{s}(u_{0},u,fu))+t,
\end{equation*}%
for every $u\in Y$ and some $u_{0}\in Y$. Does $f$ have a fixed circle on $%
Y? $
\end{question}
\section*{Acknowledgements}
The first author would like to thank Prince Sultan University for funding this work through research group Nonlinear
Analysis Methods in Applied Mathematics (NAMAM) group number RG-DES-2017-01-17.

\end{document}